\let\NAT@parse\undefined
\newtheorem{theorem}{Theorem}
\newtheorem{assumption}{Assumption}
\newtheorem{lemma}{Lemma}
\newcommand{\ME}[1]{\textcolor{blue}{\textbf{(ME: \# 1)}}}
\newcommand{\CE}[1]{\textcolor{red}{\textbf{[CE: \# 2]}}}
\newcommand{\R}{\mathbb{R}}
\DeclareMathOperator*{\argmin}{\arg\!\min}
\title{ {\LARGE \bf 
Sublinear Regret with Barzilai-Borwein Step Sizes}
}
\author{Iyanuoluwa Emiola  
\thanks{The author is with the Electrical \& Computer Engineering Department at the University of Central Florida, Orlando FL 32816, USA. Email: \texttt{iemiola@knights.ucf.edu  }}%
}
\begin{document}
\renewcommand\IEEEkeywordsname{Keywords}
\maketitle

\begin{abstract}
This paper considers the online scenario using the Barzilai-Borwein Quasi-Newton Method. In an online optimization problem, an online agent uses a certain algorithm to decide on an objective at each time step after which a possible loss is encountered. Even though the online player will ideally try to make the best decisions possible at each time step, there is a notion of regret associated with the player’s decisions. This study examines the regret of an online player using optimization methods like the Quasi-Newton methods, due to their fast convergent properties. The Barzilai-Borwein (BB) gradient method is chosen in this paper over other Quasi-Newton methods such as the Broyden-Fletcher-Goldfarb-Shanno (BFGS) algorithm because of its less computational complexities. In addition, the BB gradient method is suitable for large-scale optimization problems including the online optimization scenario presented in this paper. To corroborate the effectiveness of the Barzilai-Borwein (BB) gradient algorithm, a greedy online gradient algorithm is used in this study based on the two BB step sizes. Through a rigorous regret analysis on the BB step sizes, it is established that the regret obtained from the BB algorithm is sublinear in time. Moreover, this paper shows that the average regret using the online BB algorithm approaches zero without assuming strong convexity on the cost function to be minimized.
 \end{abstract}
\begin{IEEEkeywords}
Online Optimization, Quasi-Newton methods, Barzilai-Borwein step sizes, Sublinear regret.
\end{IEEEkeywords}
\section{Introduction}
This paper presents a gradient-based algorithm using the Barzilai-Borwein step sizes to solve an online optimization problem with regret analysis of an online player. \textit{Online Optimization} involves a process where an online agent makes a decision without knowing whether the decision is correct or not. The objective of the online agent is to  make a sequence of accurate decisions given knowledge of the optimal solution to previous decisions. A common term associated with many optimization problems known as \textit{regret} measures how well the online agent performs after a certain time, based on the the difference between the loss incured and the best decision taken \cite{hazan2016introduction}.  The problem of online optimization has applications to a number of fields including game theory, the smart grid and classification in machine learning amongst others. 
Performance of online optimization algorithms is usually measured in terms of the aggregate regret suffered by the online agent compared with the known optimal solution of each problem across the sequence of problems.

Online optimization methods and algorithms have been studied using different methods including gradient-based methods \cite{shalev2009mind,zinkevich2003online,hazan2016introduction}. 
Extensions have been considered on unconstrained problems \cite{mcmahan2012no} and online problems with long-term \cite{mahdavi2012trading}. Problems in dynamic environments have also been analyzed in \cite{mokhtari2016online}, \cite{zhang2019distributed}, \cite{yi2020distributed}, \cite{sharma2020distributed},  \cite{lesage2020second} and \cite{chen2017online}. The author in \cite{zhang2019distributed} used gradient tracking technique in a static optimization scenario and showed that the regret bounds in the dynamic optimization case is independent of the time horizon. In \cite{yi2020distributed}, the authors obtained sublinear regret in a dynamic case  for a distributed online problem using the primal-dual descent algorithm. The authors in \cite{sharma2020distributed} obtained sublinear regret for a distributed online framework that has time-varying constraints and presented a fit technique to deal with constraint violations. In \cite{chen2017online}, the authors applied the online optimization problem with an application to adversarial attack. The authors explored an online constrained problem with adversarial objective functions and constraints and obtained a sublinear regret. In addition, the authors in \cite{mokhtari2016online}, \cite{zhang2019distributed}, \cite{yi2020distributed}, \cite{sharma2020distributed},  \cite{lesage2020second} and \cite{chen2017online} used gradient methods in their computational approach to establish convergence. As well-structured as gradient methods are, applying them to large-scale online problems face several challenges and become impractical due to their well-known slow convergence rates in the static settings \cite{boyd2004convex}. 
To address the slow convergence rates of first order methods, second-order (popularly called Newton-type) methods have been proposed \cite{schraudolph2007stochastic}. The Newton method was applied in \cite{lesage2020second} where the authors showed that the Newton method performs similarly to a case where the strong convexity condition is used on the objective function. While Newton-type iterative methods have quadratic convergence, they also present a significant computational overhead from the need to invert and store the hessian of the objective function being optimized, which makes them impractical for large-scale online optimization problems. This paper aims at even improving the Newton method with the Barzilai-Borwein Quasi-Newton methods in an online optimization scenario.

To leverage the benefits of the computational simplicity of gradient methods and the convergence properties of second-order methods, the so-called quasi-Newton methods have been introduced; for example, the Broyden-Fletcher-Goldfarb-Shanno (BFGS) algorithm \cite{li2001global,eisen2017decentralized} and the Barzilai-Borwein (BB) algorithm \cite{barzilai1988two,dai2005projected}.
Quasi-Newton methods exploit the second-order (curvature information) of the objective function being optimized into the first-order framework. For example, the BFGS method approximates the information in the curvature of the hessian between time steps to use in its update, though scaling is a known issue \cite{ibrahimscaling}. The Stochastic BFGS and its low-memory variant (the L-BFGS) quasi-Newton method has been studied in online settings \cite{schraudolph2007stochastic,byrd2016stochastic} with good performance relative to the standard gradient method. The BB method, on the other hand, computes a step size such that the computed step size and gradient contain information that approximates the hessian curvature. Convergence rate analyses have been obtained for these quasi-Newton methods \cite{byrd1987global,dai2013new}
and these methods are increasingly being used in large-scale, computation-intensive applications such as distributed learning.
In this paper, an online Barzilai-Borwein quasi-Newton algorithm is presented and its performance for the two variations of the BB step sizes is analyzed using the regret. We show that the regret increases sublinearly in time.
Following an introduction of the problem and brief summary of existing approaches (Section \ref{sec:problem form-BB}), we introduce quasi-Newton methods that exploit known fast convergence of second-order methods (Section \ref{sec:about-BB}) and present our main result (Section \ref{sec:convergenceregret-BB}). Concluding remarks follow in Section \ref{sec:conclude}.

\subsection{Contributions}
In this paper, a novel regret analysis using the Barzilai-Borwein Quasi-Newton method in an online optimization scenario is presented. Due to the fast convergence property of the Newton methods, the work \cite{lesage2020second} is an improvement on existing online optimizations application problems in \cite{zhang2019distributed}, \cite{yi2020distributed}, \cite{sharma2020distributed}, and \cite{chen2017online}. However, the Quasi-Newton method using the BB step sizes presented in this paper is better than Newton  methods in dealing with convergence speeds and computing the inverse of the hessian. Even though the author in \cite{zinkevich2003online} also obtained a similar sublinear regret result, BB Quasi-Newton algorithm is known to be suitable for dealing with large-scale optimization bottleneck that the Newton method is not appropriate for. Additionally, strong convexity assumption is not needed in this paper to establish sublinear regret.

 \noindent \textit{Notation:} We represent vectors and matrices as lower and upper case letters, respectively. Let a vector or matrix transpose be $(\cdot)^T$, and the L$2$-norm of a vector be $\|\cdot\|$. Let the gradient of a function $f(\cdot)$ be $\nabla f(\cdot)$, and the set of reals numbers be $\mathbb{R}$.

\section{Problem formulation}\label{sec:problem form-BB}
Consider an online optimization problem
\begin{equation}\label{eqn:cost}
\min_{x(k)\in \mathcal{X}} f_k(x(k)),
\end{equation}
in which the feasible decision set $\mathcal{X} \in \mathbb{R}^n$ is known, assumed to be convex quadratic, non-empty, bounded, closed and fixed for all time $k = 1\hdots, K$. We assume  the number of iterations during which the online players make choices, $K$, is unknown to the player. By convexity of the cost function $f_k(\cdot)$ and $\mathcal{X}$, Problem \eqref{eqn:cost} has an optimal solution $x^*$, which is the best possible choice or decision agents can make at each time $k$.  A player (an online agent) at time $k$ uses some algorithm to choose a point $x(k) \in \mathcal{X}$, after which the player receives a loss function $f_k(\cdot)$. The loss incurred by the player is $f_k(x(k))$. These problems are common in contexts such as real time resource allocation, online classification \cite{hazan2016introduction}. The goal of the online agent is to minimize the aggregate loss by determining a sequence of feasible online solutions $x(k)$ at each time-step of the algorithm. 

Let the aggregate loss incurred by the online algorithm that solves Problem \eqref{eqn:cost} at time $K$ be given by:
\begin{equation*}
    f(K)=\sum_{k=1}^{K}f_k(x(k)).
\end{equation*}
To measure performance of the online player, we use the regret framework. The \textit{static regret} is a measure of the difference between the loss of the online player and the loss from the static case
\begin{equation*}
\min_{x\in \mathcal{X}} f_k(x),
\end{equation*}
where the single best decision $x^*$ is chosen with the benefit of hindsight. Let the aggregate loss up to time $K$ incurred by the single best decision be given by 
\begin{equation*}
    f_x(K)=\sum_{k=1}^{K}f_k(x).
\end{equation*}
Then the static regret at time $K$ is defined as \cite{hazan2016introduction}:
\begin{equation}
  R(K)= f(K)- \min_{x\in \mathcal{X}} f_x(K). 
\end{equation}

\subsection{Algorithms for Online Optimization Problem}


A commonly used algorithm for solving the static case of Problem \eqref{eqn:cost} is the gradient descent method, which involves updating the variable $x(k)$ iteratively  using the gradient of the cost function with the following equation:
\begin{equation}\label{eqn:gradient}
x(k+1) = x(k) - \alpha \nabla f(x(k)).
\end{equation}
It is known that with an appropriate choice of the step size $\alpha$, the sequence $\{x(k)\}$ converges to $x^*$ in $\mathcal{O}(1/k)$; that is, an $\varepsilon$-optimal solution is attained in about $\mathcal{O}(\frac{1}{\varepsilon})$ iterations \cite{nesterov1998introductory}.
%
Moreover, when the cost function is strongly convex, the update equation in \eqref{eqn:gradient} reaches an $\varepsilon$-optimal solution in about $\mathcal{O}(1/\varepsilon^2)$ iterations.  Even though the update scheme of gradient method are easily implementable in a distributed architecture as seen in \cite{nesterov1998introductory} and \cite{emiola2021distributed}, there have been a need for an improvement in   convergence rates of gradient methods as seen in \cite{su2014differential}. Nonetheless techniques to accelerate convergence lag behind the Newton and quasi-Newton methods \cite{su2014differential}.

To improve convergence rates in static optimization problems, algorithms that use second order information (hessian of the cost function) have been introduced. These methods leverage curvature information of the cost function in addition to direction; and are known to speed up the convergence in the neighborhood of the optimal solution. The Newton-type method is an example used as an improvement in enabling faster convergence rates than the regular gradient method.
In fact, when the cost function is quadratic, the Newton algorithm is known to converge in one time-step. For non-quadratic, the Newton method still converges in just a few time steps \cite{emiola2021comparison}. Though they have good convergence properties, there are computational costs associated with building and computing the inverse hessian. In addition, some modification are needed if the hessian is not positive definite \cite{gill1972quasi}. To avoid the computation burden of second-order methods while maintaining the structure of first-order methods, Quasi-Newton methods have been introduced. 

\subsection{Quasi-Newton Methods}
A number of quasi-Newton methods have been proposed in the literature including the Broyden-Fletcher-Goldfarb-Shanno (BFGS) algorithm \cite{eisen2017decentralized} and the Barzilai-Borwein (BB) algorithm \cite{dai2002r}, as well as the David-Fletcher-Powell (DFP) algorithm \cite{sofi2013reducing}. The main idea in the performance of these methods is to speed up convergence by using the information from the inverse hessian without computing it explicitly; for example, Barzilai-Borwein computes step-sizes using the difference of successive iterates and the gradient evaluated at those iterates. 
Although the BFGS can be used to facilitate rapid convergence, scaling is an issue especially during the process where the method approximates the information in the curvature of the Hessian between time steps as seen in \cite{ibrahimscaling}. However the Barzilai-Borwein Quasi-Newton method uses just the step sized to approximate the inverse hessian.
In this paper, we use the gradient-based method using Barzilai-Borwein step sizes to solve Problem \eqref{eqn:cost} and show that the regret increases sublinearly in time. 

\section{The Barzilai-Borwein Quasi-Newton Method}\label{sec:about-BB}
The Barzilai-Borwen quasi-Newton method is an iterative technique suitable for solving optimization problems that can yield superlinear convergence rates when the objective functions are strongly convex and quadratic \cite{barzilai1988two,dai2013new}.
 It differs from other quasi-Newton methods because it only uses one step size for the iteration as opposed to other quasi-Newton method that have more computation overhead. The Barzilai-Borwein method solves Problem \eqref{eqn:cost} iteratively using the update in \eqref{eqn:gradient}; however, the step-size $\alpha(k)$ is computed so that $\alpha(k) \nabla f(x(k))$ approximates the the inverse Hessian. We briefly introduce the two forms of the BB step-sizes used in Algorithm~\ref{alg:algorithm}. 

Consider the update $x(k+1) = x(k) - \alpha(k) \nabla f(x(k))$. 
The two forms of the BB step sizes \cite{barzilai1988two} $\alpha_1(k)$ and $\alpha_2(k)$ are given by:
\begin{equation}\label{eqn:bb-step}
    \alpha_1(k)=\frac{s(k-1)^{T} s(k-1)}{s(k-1)^T y(k-1)}.
\end{equation}
%
\begin{equation}\label{eqn:bb-step1}
     \alpha_2(k)=\frac{s(k-1)^{T} y(k-1)}{y(k-1)^T y(k-1)}.
\end{equation}
and $s(k)$ and $y(k)$ are such that 
\[
s(k-1)\triangleq x(k)-x(k-1), \quad \text {and}
\]
\begin{equation}\label{eqn:8}
    y(k-1)=\nabla f(x(k))-\nabla f(x(k-1)). \nonumber
\end{equation}
In general, there is flexibility in the choice to use $\alpha_1(k)$ or $\alpha_2(k)$ \cite{barzilai1988two}, and both step sizes can be alternated within the same algorithm after a considerable amount of iterations to facilitate convergence. The rest of this work will characterize performance of the online Algorithm \ref{alg:algorithm} using the step sizes in Equations \eqref{eqn:bb-step} and \eqref{eqn:bb-step1}, which as we will show has a regret that is sublinear in time with the average regret approaching zero.

Before stating the main result, we state some assumptions about Problem \eqref{eqn:cost} and Algorithm \ref{alg:algorithm}.
\begin{assumption}\label{assume-bounded-G}
The decision set $\mathcal{X}$ is bounded. This implies that there exists some constant $0 \leq B < \infty$ such that $|\mathcal{X}|\leq B$.
\end{assumption}

\begin{assumption}\label{assume-closed-set}
The decision set $\mathcal{X}$ is closed; that is, suppose all agents'  decisions follow an iterative sequence $x(k) \in \mathcal{X}$. If there exists some $\hat{x}\in \mathbb{R}^n$ such that $\lim_{k\rightarrow\infty} x(k) = \hat{x}$, then $\hat{x}\in \mathcal{X}$.
\end{assumption}

\begin{assumption}\label{assume-bounded-gradient}
For all decision iterates $x(k)$, the cost function $f(x(k))$ is differentiable and the gradient of the objective function $\nabla f$ is Lipschitz continuous. This means that for all $x$ and $y$, there exists $L>1$ such that:
\begin{equation*}
   \|\nabla f(x)-\nabla f(y)\|\leq L\|x-y\|.
\end{equation*}
\end{assumption}

\begin{algorithm}
\caption{Online Barzilai-Borwein Quasi-Newton Alg.}
\label{alg:algorithm}
\begin{algorithmic}[1]
\Statex \textbf{Given}: Feasible set $\mathcal{X}$ and time horizon $K$
\Statex \textbf{Initialize}: $x(0)$ and $\nabla f_0(x(0)$ arbitrarily
\For {$k=1$ to $K$}
\State Agents predicts $x(k)$ and observes $f_k(\cdot)$
\State Update $x(k+1) =  x(k) -  \alpha(k) \nabla f_k(x(k))$
\EndFor
\end{algorithmic}
\end{algorithm}


\section{Regret Bounds}
\label{sec:convergenceregret-BB}

Before we present our main results (Theorems \ref{thm:main-result} and \ref{thm:main-result-2}), we first present two lemmas that will be used in its proof. The first is a result in \cite{zinkevich2003online}, which will be used in the definition of regret and the other is the Sedrakyan's inequality.

\begin{lemma}(\cite{zinkevich2003online})\label{lemma-regret}
Without loss of generality, for all iterates $k$, there exists gradient $g(k)\in \mathbb{R}^n$ such that for all $x$, $g_k.x=f_k(x)$, where $g_k=\nabla f_k(x(k))$.\end{lemma}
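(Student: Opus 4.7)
The plan is to interpret the lemma as the standard Zinkevich \emph{linearization argument}: although the equality $g_k\cdot x = f_k(x)$ cannot literally hold for a general convex $f_k$, what is meant is that for the purposes of bounding regret one may replace each $f_k$ by the affine surrogate $\tilde f_k(x)\triangleq f_k(x(k))+g_k^T(x-x(k))$ (or, equivalently up to constants that cancel in the regret definition, by the linear map $x\mapsto g_k^T x$). The lemma then says exactly that the regret analysis can be carried out on these linearized functions without loss of generality.

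To establish this, I would proceed in three short steps. First, invoke convexity of $f_k$ together with differentiability (Assumption \ref{assume-bounded-gradient}) to write the subgradient inequality
\begin{equation*}
  f_k(y) \ \geq\ f_k(x(k)) + g_k^T\bigl(y-x(k)\bigr) \qquad \forall y\in\mathcal{X},
\end{equation*}
where $g_k=\nabla f_k(x(k))$. Rearranging gives $f_k(x(k))-f_k(y)\leq g_k^T(x(k)-y)$ for every $y\in\mathcal{X}$, in particular for the hindsight-optimal $x^*\in\argmin_{x\in\mathcal{X}} f_x(K)$.

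Second, sum this inequality over $k=1,\ldots,K$ and specialize $y=x^*$ to obtain
\begin{equation*}
   R(K) \ =\ \sum_{k=1}^{K} \bigl[f_k(x(k))-f_k(x^*)\bigr] \ \leq\ \sum_{k=1}^{K} g_k^T\bigl(x(k)-x^*\bigr).
\end{equation*}
The right-hand side is precisely the regret that would be incurred against the sequence of \emph{linear} loss functions $\tilde f_k(x)=g_k^T x$. Hence any sublinear upper bound proved for the linear-loss regret automatically transfers to $R(K)$, which is the sense in which the lemma asserts one may take $f_k(x)=g_k\cdot x$ without loss of generality.

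Third, I would note that the reverse direction (that the linearized problem is no easier) is not needed: the lemma is a one-sided domination used only to upper bound $R(K)$. The main (minor) obstacle is therefore interpretive rather than technical—one must be careful that the constant terms $f_k(x(k))-g_k^T x(k)$ appearing in the affine surrogate $\tilde f_k$ cancel when the regret differences $\tilde f_k(x(k))-\tilde f_k(x^*)$ are formed, so that the purely linear representation $g_k\cdot x$ suffices. No further assumptions beyond convexity and differentiability of $f_k$ are required.
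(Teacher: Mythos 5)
Your proposal is correct and follows essentially the same route as the paper: the paper simply defers to Zinkevich's original argument, which is exactly the convexity-based linearization $f_k(x(k))-f_k(x^*)\leq \nabla f_k(x(k))^T(x(k)-x^*)$ that you derive, and the paper indeed uses the lemma only in this one-sided way to write $R(K)=\sum_k (x(k)-x^*)^T g(k)$. Your interpretive caveat --- that the literal equality $g_k\cdot x=f_k(x)$ cannot hold for general convex $f_k$ and must be read as a without-loss-of-generality reduction to linear losses --- is accurate and consistent with the intended meaning.
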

\begin{proof}
The proof can be seen in \cite{zinkevich2003online}.
\end{proof}
\begin{lemma} (\textbf{The Sedrakyan's Inequality})\label{Titu's-Lemma}
 For all positive reals $a_1, a_2,........a_n$ and  $b_1, b_2,........b_n$, the following inequality holds:
\begin{equation*}
    \sum_{i=1}^{n}\frac{a_i^{2}}{b_i}\geq \frac{(\sum_{i=1}^{n}a_i)^{2}}{\sum_{i=1}^{n}b_i}.
\end{equation*}
\end{lemma}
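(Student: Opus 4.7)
The plan is to derive Sedrakyan's inequality as an immediate consequence of the Cauchy--Schwarz inequality, which is the shortest and cleanest route. The key trick is a judicious choice of two auxiliary sequences so that Cauchy--Schwarz rearranges into the desired form. Specifically, since all $b_i$ are strictly positive by hypothesis, I would set $u_i \triangleq a_i/\sqrt{b_i}$ and $v_i \triangleq \sqrt{b_i}$, so that $u_i v_i = a_i$, $u_i^2 = a_i^2/b_i$, and $v_i^2 = b_i$.

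Applying Cauchy--Schwarz in the form $\bigl(\sum_i u_i v_i\bigr)^2 \le \bigl(\sum_i u_i^2\bigr)\bigl(\sum_i v_i^2\bigr)$ then yields
$$\Bigl(\sum_{i=1}^{n} a_i\Bigr)^2 \le \Bigl(\sum_{i=1}^{n} \frac{a_i^2}{b_i}\Bigr)\Bigl(\sum_{i=1}^{n} b_i\Bigr),$$
and dividing both sides by the strictly positive quantity $\sum_{i=1}^n b_i$ gives the claim. This completes the argument in essentially one line after the substitution.

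Should a proof that does not invoke Cauchy--Schwarz be preferred, I would fall back on induction on $n$. The base case $n = 2$ asserts $a_1^2/b_1 + a_2^2/b_2 \ge (a_1 + a_2)^2/(b_1 + b_2)$, which, upon clearing denominators, is equivalent to $(a_1 b_2 - a_2 b_1)^2 \ge 0$, a triviality. The inductive step groups the first $n-1$ terms via the inductive hypothesis into $A^2/B$ with $A = \sum_{i=1}^{n-1} a_i$ and $B = \sum_{i=1}^{n-1} b_i$, and then applies the $n = 2$ case to $A^2/B + a_n^2/b_n$ to obtain $(A+a_n)^2/(B+b_n)$, which is the desired bound.

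There is no substantive obstacle here; the lemma is a standard rearrangement identity, so the only real judgment call is choosing between the Cauchy--Schwarz derivation and the inductive one. I would favor the Cauchy--Schwarz route, since it is a one-line deduction once the substitution is fixed and it also makes the equality condition (namely, that $a_i/b_i$ be constant in $i$) transparent for possible later use in the regret analysis.
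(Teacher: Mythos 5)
Your proof is correct. The paper itself does not prove this lemma at all --- it simply defers to the reference on Sedrakyan's inequality --- so your Cauchy--Schwarz derivation (with $u_i = a_i/\sqrt{b_i}$, $v_i = \sqrt{b_i}$) supplies a complete, self-contained argument where the paper offers none. The substitution is the standard one, the division by $\sum_i b_i$ is justified since all $b_i$ are strictly positive, and your fallback induction with the $n=2$ base case reducing to $(a_1 b_2 - a_2 b_1)^2 \ge 0$ is also sound. Your observation about the equality condition ($a_i/b_i$ constant) is a nice bonus that the cited treatment would also give, though the paper never uses it. No gaps.
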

\begin{proof}
We refer readers to \cite{sedrakyan2018algebraic} for a proof.
\end{proof}
\noindent Another result we will use is  the static regret bounds for $R(K)$ which is shown in \cite{zinkevich2003online}: 
 \begin{equation}\label{eqn:generalized}
     R(K)\leq \|D\|^2\frac{1}{2\alpha (K)}+\frac{\|\nabla f_m\|^2}{2} \sum_{k=1}^{K}\alpha (k), 
  \end{equation}
As seen in \cite{zinkevich2003online}, $D$ denotes the maximum value of the diameter of  $\mathcal{X}$ and $ \|\nabla f_m\|=   \max_{x\in \mathcal{X}} \|\nabla f_k(x) \|$.

We will now proceed to characterize the regret obtained from Algorithm \ref{alg:algorithm} for Problem \eqref{eqn:cost} with the two BB step sizes.

\begin{theorem}\label{thm:main-result}
Consider Problem \eqref{eqn:cost} and let: 
$$\alpha (k)  =  \frac{s(k{-}1)^{T} s(k{-}1)}{s(k{-}1)^T y(k{-}1)}$$ in Algorithm \ref{alg:algorithm}. If $\frac{e-d}{c-b}\leq \frac{d}{b}$, where
\begin{equation*}
    b =(\|(x(1){-}x(0)\|+\|(x(2){-}x(1)\|)^2 ,
\end{equation*}
\begin{equation*}
    c = 2(\|(x(1){-}x(0)\|^2+\|(x(2){-}x(1)\|^2),
\end{equation*}
\begin{equation*}
  d =  \sum_{k=1}^{K}(x(k)-x(k-1))^{T} (\nabla f(x(k))-\nabla f(x(k-1))), 
\end{equation*}
and $e =  \sum_{k=1}^{K}L\|x(k)-x(k-1)\|^2$.

Also if $P=\min (P,Z)$ where:
\begin{equation*}
 P =   \sum_{k=1}^{K}\alpha (k)
\end{equation*}
and
\begin{equation*}
    Z = \frac{2(\|(x(1){-}x(0)\|^2+\|(x(2){-}x(1)\|^2)}{L \sum_{k=1}^{K}(\|x(k)\|^2+\|x(k-1)\|^2)} 
\end{equation*}
Then the
average regret is bounded by:
\begin{equation*} 
 \frac{R(K)}{K}\leq \|D\|^2\frac{1}{2K\alpha (K)}{+}\frac{\|\nabla f_m\|^2}{2K} \ \Psi,
 \end{equation*}
\[
\text{where} \ \ \Psi = \frac{2(\|(x(1){-}x(0)\|^2+\|(x(2){-}x(1)\|^2)}{L \sum_{k=1}^{K}\|x(k)\|^2{+}L\sum_{k=1}^{K}\|x(k{-}1)\|^2},
\] 
$L = \max_k L_k$, $L_k$ is the Lipschitz parameter of $\nabla f_k(x(k)$,
in Problem \eqref{eqn:cost}
and $\lim_{K\to\infty}\frac{R(K)}{K}$ approaches $0$. 
\end{theorem}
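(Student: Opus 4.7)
The plan is to start from the Zinkevich static regret bound \eqref{eqn:generalized}, divide through by $K$, and specialize $\alpha(k)$ to the first BB step size. Assumption~\ref{assume-bounded-gradient} (Lipschitz gradient) combined with the Cauchy-Schwarz inequality gives $s(k-1)^{T}y(k-1)\leq L\|s(k-1)\|^{2}$, and hence $\alpha(k)\geq 1/L$ for every $k$. Consequently the first term $\|D\|^{2}/(2K\alpha(K))$ is already $O(1/K)$, and the whole question reduces to bounding $P=\sum_{k=1}^{K}\alpha(k)$ tightly enough that $P/K\to 0$.

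To control $P$ I would combine two ingredients. First, the elementary estimate $\|s(k-1)\|^{2}=\|x(k)-x(k-1)\|^{2}\leq 2(\|x(k)\|^{2}+\|x(k-1)\|^{2})$ rewrites the numerators of the BB step sizes in iterate coordinates, producing exactly the quantities appearing in $Z$ and $\Psi$. Second, Sedrakyan's inequality (Lemma~\ref{Titu's-Lemma}) applied with $a_k=\|s(k-1)\|$ and $b_k=s(k-1)^{T}y(k-1)$ yields, first for the initial pair and then for the full sum,
\[
\alpha(1)+\alpha(2)\;\geq\;\frac{(\|s(0)\|+\|s(1)\|)^{2}}{s(0)^{T}y(0)+s(1)^{T}y(1)}=\frac{b}{s(0)^{T}y(0)+s(1)^{T}y(1)},\qquad \sum_{k=1}^{K}\alpha(k)\;\geq\;\frac{\bigl(\sum_{k=1}^{K}\|s(k-1)\|\bigr)^{2}}{d}.
\]
The hypothesis $(e-d)/(c-b)\leq d/b$ is equivalent via the mediant inequality to $cd\geq be$, and this is precisely what allows the partial-sum and full-sum Sedrakyan estimates to be combined and inverted into a single upper bound $P\leq Z$. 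The shorthand $P=\min(P,Z)$ in the statement simply records this outcome.

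With $P\leq Z$ in hand, plugging into Zinkevich's bound and dividing by $K$ delivers
\[
\frac{R(K)}{K}\;\leq\;\frac{\|D\|^{2}}{2K\alpha(K)}+\frac{\|\nabla f_m\|^{2}}{2K}\Psi,
\]
with $\Psi=Z$ as written. Assumption~\ref{assume-bounded-G} (bounded $\mathcal{X}$) keeps $\|\nabla f_m\|$ and the numerator of $\Psi$ finite, while the denominator of $\Psi$ grows with $K$; together with the uniform lower bound $\alpha(K)\geq 1/L$ this makes both right-hand-side terms $O(1/K)$, so $R(K)/K\to 0$ on letting $K\to\infty$.

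The main obstacle I anticipate is the direction of Sedrakyan's inequality: it naturally produces a \emph{lower} bound on $P$, whereas Zinkevich's estimate needs an \emph{upper} bound. Reconciling these two directions is the entire purpose of the auxiliary ratio condition $(e-d)/(c-b)\leq d/b$; verifying that $c-b$, $e-d$, $b$, $d$ are all strictly positive (which follows from Cauchy-Schwarz and from $d\leq e$ via Lipschitz) and that the mediant manipulation really produces $P\leq Z$ rather than the reverse inequality is the delicate part of the argument.
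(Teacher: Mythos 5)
Your overall route is the same as the paper's: start from the Zinkevich bound \eqref{eqn:generalized}, reduce everything to controlling $P=\sum_{k=1}^{K}\alpha(k)$, apply Sedrakyan's inequality, and use the ratio condition $\frac{e-d}{c-b}\le\frac{d}{b}$ (equivalently $be\le cd$, i.e.\ $b/d\le c/e$) to pass to the quantity $Z$. Your observation that $\alpha_1(k)\ge 1/L$ (Cauchy--Schwarz plus Lipschitz continuity of the gradient) is actually an improvement on the paper, which never justifies why the first term $\|D\|^2/(2K\alpha(K))$ vanishes as $K\to\infty$.

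However, the step where you claim the Sedrakyan estimates can be ``combined and inverted into a single upper bound $P\le Z$'' is a genuine gap, and it is exactly the step you flag as delicate. What Sedrakyan plus the mediant/ratio condition actually deliver is the chain $P\ge Q=b/d$ together with $Q\le c/e\le Z$; from a lower bound $Q\le P$ and the separate inequality $Q\le Z$ one cannot conclude $P\le Z$ --- these two facts are perfectly compatible with $P$ exceeding $Z$ by an arbitrary amount, and every inequality the machinery produces points in the wrong direction for an upper bound on $P$. The paper does not derive $P\le Z$ either: it imposes it outright through the hypothesis ``$P=\min(P,Z)$'' in the theorem statement, which is literally the assumption $P\le Z$, at which point the Sedrakyan and mediant manipulations play no logical role in reaching the conclusion. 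If you likewise read $P=\min(P,Z)$ as a standing hypothesis rather than a derived fact, your argument closes and coincides with the paper's; as written, with $P\le Z$ presented as a consequence of the ratio condition, the proof does not go through.
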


\begin{proof}
First, by using the results of Lemma \ref{lemma-regret}, the regret of Algorithm \ref{alg:algorithm} can be expressed as:
 \[ R(K)=   \sum_{k=1}^{K}(x(k)-x^*) g(k).\]
Then from Equation \eqref{eqn:gradient}, the regret
\[
\R(K)=   \sum_{k=1}^{K}(x(k-1) - \alpha(k-1) \nabla f(x(k-1))-x^*) g(k),
 \]
where $\alpha(k)$ is as expressed in  \eqref{eqn:bb-step} . To prove Theorem \ref{thm:main-result}, the approach will be to upper-bound the aggregate sum of the step size $\alpha(k)$ and use the generalized bound for online gradient descent in Equation \eqref{eqn:generalized}. This approach is possible since the gradient of the cost function at each time in the sequence of problems is bounded (Assumption \ref{assume-bounded-gradient}). Proceeding, the running sum of the step sizes $\alpha(k)$ up to time $K$ is expressed as
\begin{align*}
\sum_{k=1}^{K}\alpha (k)   &=   \sum_{k=1}^{K}\frac{s(k-1)^{T} s(k-1)}{s(k-1)^T y(k-1)} \\
&= \sum_{k=1}^{K}\frac{(x(k)-x(k-1))^{T}(x(k)-x(k-1))}{(x(k){-}(k{-}1))^{T} (\nabla f(x(k)){-}\nabla f(x(k{-}1)))}  \\
&=\sum_{k{=}1}^{K}\frac{\|x(k){-}x(k{-}1)\|^{2}}{(x(k){-}x(k{-}1))^{T}(\nabla f(x(k)){-}\nabla f(x(k{-}1)))}.
\end{align*}
By applying the result in Lemma \ref{Titu's-Lemma} to the right hand side of the preceding inequality, we obtain that:
\begin{equation} \label{eqn:titu-out}
 \sum_{k=1}^{K}\alpha (k) {\geq} \frac{(\sum_{k=1}^{K}\|(x(k){-}x(k{-}1))\|)^2}{\sum_{k=1}^{K}(x(k){-}x(k{-}1))^{T} (\nabla f(x(k)){-}\nabla f(x(k{-}1)))}   
\end{equation}
By inspection, if write the first few terms of the numerator of equation \eqref{eqn:titu-out}, it is evident that equation \eqref{eqn:titu-out} can be further lower bounded according to the following:
\begin{equation} \label{eqn:titu-out1}
 \sum_{k=1}^{K}\alpha (k) {\geq} \frac{(\|(x(1){-}x(0)\|+\|(x(2){-}x(1)\|)^2}{\sum_{k=1}^{K}(x(k){-}x(k{-}1))^{T} (\nabla f(x(k)){-}\nabla f(x(k{-}1)))}  
\end{equation}
Clearly because the terms $\|(x(1){-}x(0)\|$ and $\|(x(2){-}x(1)\|$ are positive, the numerator of equation \eqref{eqn:titu-out1} can be upper-bounded according to the following:
\begin{align*}
   (\|(x(1){-}x(0)\|+\|(x(2){-}x(1)\|)^2 \\ \leq  2(\|(x(1){-}x(0)\|^2+\|(x(2){-}x(1)\|^2)
\end{align*}
To bound the denominator of Equation \eqref{eqn:titu-out1}, we use the Lipschitz continuity of the gradients of $f(\cdot)$ with parameter $L>1$.
Therefore,
\begin{align*}
&\sum_{k=1}^{K}(x(k)-x(k-1))^{T} (\nabla f(x(k))-\nabla f(x(k-1))) \\
&\leq \sum_{k=1}^{K}L\|x(k)-x(k-1)\|^2.
\end{align*}
If we represent the bounds in the numerator and denominator of equation \eqref{eqn:titu-out1} by the following variables such that:
\begin{equation*}
    b =(\|(x(1){-}x(0)\|+\|(x(2){-}x(1)\|)^2 ,
\end{equation*}
\begin{equation*}
    c = 2(\|(x(1){-}x(0)\|^2+\|(x(2){-}x(1)\|^2),
\end{equation*}
\begin{equation*}
  d =  \sum_{k=1}^{K}(x(k)-x(k-1))^{T} (\nabla f(x(k))-\nabla f(x(k-1))),
\end{equation*}
and
\begin{equation*}
    e =  \sum_{k=1}^{K}L\|x(k)-x(k-1)\|^2.
\end{equation*}
It has been shown that $b\leq c$ and $d\leq e$. Therefore to find an upper bound for equation \eqref{eqn:titu-out1}, we use the condition that if $\frac{e-d}{c-b}\leq \frac{d}{b}$, then we obtain:
\begin{equation*}
    \frac{b}{d}\leq \frac{c}{e}
\end{equation*}
So we obtain the bounds of the right hand side of \eqref{eqn:titu-out1} as: 
\begin{align*}
&\frac{(\|(x(1){-}x(0)\|+\|(x(2){-}x(1)\|)^2}{\sum_{k=1}^{K}(x(k){-}x(k{-}1))^{T} (\nabla f(x(k)){-}\nabla f(x(k{-}1)))}   \\
&\leq \frac{2(\|(x(1){-}x(0)\|^2+\|(x(2){-}x(1)\|^2)}{\sum_{k=1}^{K}L\|x(k)-x(k-1)\|^2} \\
&\leq \frac{2(\|(x(1){-}x(0)\|^2+\|(x(2){-}x(1)\|^2)}{L \sum_{k=1}^{K}(\|x(k)\|^2+\|x(k-1)\|^2-2\|x(k)\|\|x(k-1)\|)} \\
&\leq \frac{2(\|(x(1){-}x(0)\|^2+\|(x(2){-}x(1)\|^2)}{L \sum_{k=1}^{K}(\|x(k)\|^2+\|x(k-1)\|^2)} 
\end{align*}
If we let the left hand side of equation \eqref{eqn:titu-out1} be represented by:
\begin{equation*}
 P =   \sum_{k=1}^{K}\alpha (k)
\end{equation*}
and we let the right hand side of equation \eqref{eqn:titu-out1} be denoted as:
\begin{equation*}
 Q =  \frac{(\|(x(1){-}x(0)\|+\|(x(2){-}x(1)\|)^2}{\sum_{k=1}^{K}(x(k){-}x(k{-}1))^{T} (\nabla f(x(k)){-}\nabla f(x(k{-}1)))}  
\end{equation*}
Similarly if we let the derived upper bound of $Q$ be given by:
\begin{equation*}
    Z = \frac{2(\|(x(1){-}x(0)\|^2+\|(x(2){-}x(1)\|^2)}{L \sum_{k=1}^{K}(\|x(k)\|^2+\|x(k-1)\|^2)} 
\end{equation*}
From the above analysis, we observe that $P\geq Q$ and $Q\leq Z$. Therefore, if $P=\min (P,Z)$, then we can deduce that $P\leq Z$.\\
By the established relationship between $P$ and $Z$ and also using the triangle inequality, we obtain the bound for using the first BB step size as: 
\begin{align*}
   \sum_{k=1}^{K}\alpha (k)\leq \frac{2(\|(x(1){-}x(0)\|^2+\|(x(2){-}x(1)\|^2)}{L\sum_{k=1}^{K}\|x(k)\|^2+L\sum_{k=1}^{K}\|x(k{-}1)\|^2}
\end{align*}
By using the regret bound equation in \eqref{eqn:generalized}, we obtain:
\begin{align*}
R(K)\leq \|D\|^2\frac{1}{2\alpha (K)}{+}\frac{\|\nabla f_m\|^2}{2} \Psi,
\end{align*}
$$ \text{where} \ \ \Psi = \frac{2(\|(x(1){-}x(0)\|^2+\|(x(2){-}x(1)\|^2)}{L \sum_{k=1}^{K}\|x(k)\|^2{+}L\sum_{k=1}^{K}\|x(k{-}1)\|^2}. $$
The average regret over $K$ time steps can then be expressed as
\begin{align*}
 \frac{R(K)}{K}\leq \|D\|^2\frac{1}{2K\alpha (K)}{+}\frac{\|\nabla f_m\|^2}{2K} \Psi.
\end{align*}
Since $\|D\|$ is constant based on its value in \eqref{eqn:generalized}, and $\|\nabla f_m\|^2$ is also constant, we conclude that that the average regret

$\lim_{K\to\infty}\frac{R(K)}{K}$ approaches $0$.
\end{proof}
Next, we consider the performance of Algorithm \ref{alg:algorithm} using the second BB step-size in Equation \eqref{eqn:bb-step1}.
\begin{theorem}\label{thm:main-result-2}
Consider Problem \eqref{eqn:cost} and let Algorithm \ref{alg:algorithm} be used to solve Problem \eqref{eqn:cost} where $\alpha (k)= \frac{s(k-1)^{T} y(k-1)}{y(k-1)^T y(k-1)}$; and $L$ is the maximum of all Lipschitz continuity parameters of all gradients of the cost function in Problem \eqref{eqn:cost}, then, the regret is upper bounded by
\begin{equation*}
R(K)\leq \|D\|^2\frac{1}{2\alpha (K)}+\frac{\|\nabla f_m\|^2}{2} \zeta,
\end{equation*}
where 
\[
\zeta = (\sum_{k{=}1}^{K} (((A(k)^T)^{2})^{\frac{1}{2}}(\sum_{k{=}1}^{K} ((B(k))^{2})^{\frac{1}{2}}(\sum_{k{=}1}^{K} ((C(k))^{2})^{\frac{1}{2}}.
\]
and the average regret
$\lim_{K\to\infty}\frac{R(K)}{K}$ approaches $0$.
\end{theorem}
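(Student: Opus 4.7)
The plan is to mirror the structure of the proof of Theorem \ref{thm:main-result} but adapt it to the second BB step size. I would start from the generalized online-gradient regret bound in Equation \eqref{eqn:generalized}, which reduces the task to producing an upper bound on the cumulative step-size sum $\sum_{k=1}^{K}\alpha(k)$ when $\alpha(k)=s(k-1)^T y(k-1)/(y(k-1)^T y(k-1))$. Substituting the definitions $s(k-1)=x(k)-x(k-1)$ and $y(k-1)=\nabla f(x(k))-\nabla f(x(k-1))$ rewrites each summand as $\frac{(x(k)-x(k-1))^{T}(\nabla f(x(k))-\nabla f(x(k-1)))}{\|\nabla f(x(k))-\nabla f(x(k-1))\|^{2}}$, which I would regard as a product of three scalar/vector factors.

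Second, unlike in Theorem \ref{thm:main-result} where Sedrakyan's inequality cleanly exploited the squared numerator of the first BB step, here I would apply the Cauchy--Schwarz inequality sequentially (or a three-factor H\"older-type inequality) to split each summand into a product $A(k)B(k)C(k)$ and bound $\sum_{k}A(k)B(k)C(k)\leq \bigl(\sum_{k}A(k)^{T}A(k)\bigr)^{1/2}\bigl(\sum_{k}B(k)^{2}\bigr)^{1/2}\bigl(\sum_{k}C(k)^{2}\bigr)^{1/2}$, which is exactly $\zeta$. A natural decomposition takes $A(k)=x(k)-x(k-1)$ to absorb the displacement vector, $B(k)$ to encode the gradient-difference magnitude, and $C(k)$ to encode the reciprocal $1/\|y(k-1)\|$; the notation $(A(k)^{T})^{2}$ in the theorem statement is consistent with $A(k)$ being vector valued, so that $(A(k)^{T})^{2}$ is really $A(k)^{T}A(k)=\|A(k)\|^{2}$. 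Substituting this bound on $\sum_{k}\alpha(k)$ into Equation \eqref{eqn:generalized} yields the claimed regret estimate $R(K)\leq \|D\|^{2}/(2\alpha(K))+\|\nabla f_{m}\|^{2}\zeta/2$.

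To conclude that $R(K)/K\to 0$, I would invoke Assumption \ref{assume-bounded-gradient} (Lipschitz continuity of $\nabla f$) to relate $\|y(k-1)\|$ to $\|s(k-1)\|$, and Assumption \ref{assume-bounded-G} (boundedness of $\mathcal{X}$) to keep the displacements uniformly controlled, so that each of the three sums inside $\zeta$ grows sublinearly in $K$ and therefore $\zeta=o(K)$; dividing by $K$ and letting $K\to\infty$ then forces the average regret to vanish, just as in Theorem \ref{thm:main-result}. The main obstacle I anticipate is controlling the factor $C(k)$: since $C(k)$ involves $1/\|y(k-1)\|$, an upper bound requires a lower bound on the gradient difference, which Assumption \ref{assume-bounded-gradient} does not supply directly and which usually requires a strong-convexity or nondegeneracy argument on the iterates. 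Keeping the triple Cauchy--Schwarz product $\zeta$ of sublinear order in $K$ without invoking strong convexity, as the paper's contribution promises, is the key technical hurdle and will likely require either carefully tracking the regime in which $\|y(k-1)\|$ can get small or replacing $C(k)$ by a suitably capped surrogate before the Cauchy--Schwarz step.
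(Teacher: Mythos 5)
Your proposal follows essentially the same route as the paper: substitute the second BB step size into the cumulative sum, write each summand as the product $A(k)^{T}B(k)\cdot C(k)$ with $A(k)=x(k)-x(k-1)$, $B(k)=\nabla f(x(k))-\nabla f(x(k-1))$, $C(k)=\|\nabla f(x(k))-\nabla f(x(k-1))\|^{-2}$, apply Cauchy--Schwarz to obtain $\zeta$, and feed the result into the generalized bound \eqref{eqn:generalized}. Up to that point you have reproduced the paper's argument, including the reading of $(A(k)^{T})^{2}$ as $\|A(k)\|^{2}$.

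The obstacle you flag at the end is, however, a genuine gap --- and it is one the paper does not close either. The paper's proof concludes that $\lim_{K\to\infty}R(K)/K=0$ merely from the observation that $\|D\|$ is constant and that ``$A(k)$, $B(k)$ and $C(k)$ are also positive,'' which does not imply $\zeta=o(K)$: if each of the three sums inside $\zeta$ grows linearly in $K$ (which boundedness of $\mathcal{X}$ and Lipschitz gradients readily permit), the product of their square roots grows like $K^{3/2}$ and the average regret bound diverges. Worse, as you note, $C(k)$ contains $\|y(k-1)\|^{-2}$, so bounding $\sum_k C(k)^2$ from above requires a \emph{lower} bound on the gradient differences, which none of Assumptions \ref{assume-bounded-G}--\ref{assume-bounded-gradient} provides; near convergence $\|y(k-1)\|\to 0$ and $C(k)$ blows up. So your instinct that the final step needs either a nondegeneracy/strong-convexity argument or a capped surrogate for $C(k)$ is correct; the paper simply asserts the conclusion without supplying such an argument, and your proposal is at least as rigorous as the published proof on every step it completes.
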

\begin{proof}
The approach to proving Theorem \ref{thm:main-result-2} will be similar to that of Theorem \ref{thm:main-result}, where we will obtain bounds for the aggregate sum of the step sizes in $R(K)$ and use the generalized bound for online gradient descent algorithm. In this case, the sum of the aggregate step sizes is expressed as
\begin{equation*}
\sum_{k=1}^{K}\alpha (k)=\sum_{k=1}^{K}\frac{s(k-1)^{T} y(k-1)}{y(k-1)^T y(k-1)}
\end{equation*}
By using the relationship
\[
s(k-1)\triangleq x(k)-x(k-1), \quad \text {and}
\]
\begin{equation}\label{eqn:8}
    y(k-1)=\nabla f(x(k))-\nabla f(x(k-1)). \nonumber
\end{equation}
and by noting that $y(k-1)^T y(k-1)=\|y(k-1)\|^{2}$, and also expressing as a product of three different functions, we obtain:
\begin{equation}\label{eqn:splitting-step-sum}
\sum_{k=1}^{K}\alpha(k)=\sum_{k=1}^{K}((x(k){-}x(k-1))^T (\nabla f(x(k)){-} \nabla f(x(k{-}1)))\|\nabla f(x(k)){-}\nabla f(x(k{-}1)\|^{-2})
\end{equation}
For the purpose of clarity, let 
\begin{align*}
    A(k) &= ((x(k){-}x(k-1)) \\
    B(k) &= (\nabla f(x(k)){-} \nabla f(x(k{-}1))) \quad \text{and}\\
    C(k) &=     \|\nabla f(x(k)){-}\nabla f(x(k{-}1)\|^{-2}
\end{align*}
Applying the Cauchy-Schwarz inequality to the right hand side of Equation \eqref{eqn:splitting-step-sum}, we obtain that:
\begin{align*}
   \sum_{k{=}1}^{K}\alpha(k) &= \sum_{k{=}1}^{K} (A(k)^TB(k))C(k) \\
   &\leq \sum_{k{=}1}^{K} (((A(k)^T)^{2}(B(k))^{2})  \sum_{k{=}1}^{K} (C(k))^{2})^{\frac{1}{2} }, \\
   &\leq (\sum_{k{=}1}^{K} (((A(k)^T)^{2})^{\frac{1}{2}}(\sum_{k{=}1}^{K} ((B(k))^{2})^{\frac{1}{2}}(\sum_{k{=}1}^{K} ((C(k))^{2})^{\frac{1}{2}}.
\end{align*}

Applying the generalized regret bound as seen in Equation \eqref{eqn:generalized}, we obtain the regret $R(K)$ as: 
\begin{equation*}
R(K)\leq \|D\|^2\frac{1}{2\alpha (K)}+\frac{\|\nabla f_m\|^2}{2} \zeta,
\end{equation*}
where the value of $\zeta$ is the upper bound of $\sum_{k=1}^{K}\alpha(k)$ obtained above after applying Cauchy-Schwarz inequality and it is given by:
\[
\zeta = (\sum_{k{=}1}^{K} (((A(k)^T)^{2})^{\frac{1}{2}}(\sum_{k{=}1}^{K} ((B(k))^{2})^{\frac{1}{2}}(\sum_{k{=}1}^{K} ((C(k))^{2})^{\frac{1}{2}}.
\]
Therefore the average regret is
\begin{equation*}
     \frac{R(K)}{K}\leq \|D\|^2\frac{1}{2K\alpha (K)}+\frac{\|\nabla f_m\|^2}{2K} \zeta 
 \end{equation*}
Furthermore, since $\|D\|$ is constant based on its value in \eqref{eqn:generalized}, and the terms $A(k)$, $B(k)$ and $C(k)$ are also positive, we conclude that the average regret 
$\lim_{K\to\infty}\frac{R(K)}{K}$ approaches $0$.

\end{proof}
The Barzilai-Borwein step size in the gradient-based Algorithm \ref{alg:algorithm} results in a regret that grows sublinearly in time and yields an average regret of zero as time $K$ goes to infinity.



\section{Conclusions}
In this work, an online Barzilai-Borwein quasi-Newton algorithm using the regret framework is presented to show the usefulness of Quasi-Newton methods for large-scale and computational intensive optimization problems. The analysis for both Barzilai-Borwein step sizes showed that the regret of the algorithm grows sublinearly in time and that the average regret approaches zero. The use of the generalized regret bounds for online gradient descent introduced in \cite{hazan2016introduction} simplified the analyses. For future research work, a regret analysis in a dynamic scenario for online Quasi-Newton method will be presented using the Barzilai-Borwein and the Broyden-Fletcher-Goldfarb-Shanno (BFGS) algorithm. Another interesting optimization method with a fast convergence property is the \textit{Conjugate Gradient} method. It should perform well in an online optimization problem but it is unknown whether it will be superior to most online optimization algorithms. Therefore, readers are free to explore research topics on applying the conjugate gradient method to improve the convergence rates for online optimization problems.

\label{sec:conclude}

\bibliographystyle{IEEEtran}
{\small
\bibliography{dbb-ref-1.bib}}

\begin{thebibliography}{10}
\providecommand{\url}[1]{#1}
\csname url@samestyle\endcsname
\providecommand{\newblock}{\relax}
\providecommand{\bibinfo}[2]{#2}
\providecommand{\BIBentrySTDinterwordspacing}{\spaceskip=0pt\relax}
\providecommand{\BIBentryALTinterwordstretchfactor}{4}
\providecommand{\BIBentryALTinterwordspacing}{\spaceskip=\fontdimen2\font plus
\BIBentryALTinterwordstretchfactor\fontdimen3\font minus
  \fontdimen4\font\relax}
\providecommand{\BIBforeignlanguage}[2]{{%
\expandafter\ifx\csname l@#1\endcsname\relax
\typeout{** WARNING: IEEEtran.bst: No hyphenation pattern has been}%
\typeout{** loaded for the language `#1'. Using the pattern for}%
\typeout{** the default language instead.}%
\else
\language=\csname l@#1\endcsname
\fi
#2}}
\providecommand{\BIBdecl}{\relax}
\BIBdecl

\bibitem{hazan2016introduction}
E.~Hazan \emph{et~al.}, ``Introduction to online convex optimization,''
  \emph{Foundations and Trends{\textregistered} in Optimization}, vol.~2, no.
  3-4, pp. 157--325, 2016.

\bibitem{shalev2009mind}
S.~Shalev-Shwartz and S.~M. Kakade, ``Mind the duality gap: Logarithmic regret
  algorithms for online optimization,'' in \emph{Advances in Neural Information
  Processing Systems}, 2009, pp. 1457--1464.

\bibitem{zinkevich2003online}
M.~Zinkevich, ``Online convex programming and generalized infinitesimal
  gradient ascent,'' in \emph{Proceedings of the 20th International Conference
  on Machine Learning (ICML-03)}, 2003, pp. 928--936.

\bibitem{mcmahan2012no}
B.~Mcmahan and M.~Streeter, ``No-regret algorithms for unconstrained online
  convex optimization,'' in \emph{Advances in neural information processing
  systems}, 2012, pp. 2402--2410.

\bibitem{mahdavi2012trading}
M.~Mahdavi, R.~Jin, and T.~Yang, ``Trading regret for efficiency: online convex
  optimization with long term constraints,'' \emph{Journal of Machine Learning
  Research}, vol.~13, no. Sep, pp. 2503--2528, 2012.

\bibitem{mokhtari2016online}
A.~Mokhtari, S.~Shahrampour, A.~Jadbabaie, and A.~Ribeiro, ``Online
  optimization in dynamic environments: Improved regret rates for strongly
  convex problems,'' in \emph{2016 IEEE 55th Conference on Decision and Control
  (CDC)}.\hskip 1em plus 0.5em minus 0.4em\relax IEEE, 2016, pp. 7195--7201.

\bibitem{zhang2019distributed}
Y.~Zhang, R.~J. Ravier, M.~M. Zavlanos, and V.~Tarokh, ``A distributed online
  convex optimization algorithm with improved dynamic regret,'' in \emph{2019
  IEEE 58th Conference on Decision and Control (CDC)}.\hskip 1em plus 0.5em
  minus 0.4em\relax IEEE, 2019, pp. 2449--2454.

\bibitem{yi2020distributed}
X.~Yi, X.~Li, L.~Xie, and K.~H. Johansson, ``Distributed online convex
  optimization with time-varying coupled inequality constraints,'' \emph{IEEE
  Transactions on Signal Processing}, vol.~68, pp. 731--746, 2020.

\bibitem{sharma2020distributed}
P.~Sharma, P.~Khanduri, L.~Shen, D.~J. Bucci~Jr, and P.~K. Varshney, ``On
  distributed online convex optimization with sublinear dynamic regret and
  fit,'' \emph{arXiv preprint arXiv:2001.03166}, 2020.

\bibitem{lesage2020second}
A.~Lesage-Landry, J.~A. Taylor, and I.~Shames, ``Second-order online nonconvex
  optimization,'' \emph{IEEE Transactions on Automatic Control}, 2020.

\bibitem{chen2017online}
T.~Chen, Q.~Ling, and G.~B. Giannakis, ``An online convex optimization approach
  to proactive network resource allocation,'' \emph{IEEE Transactions on Signal
  Processing}, vol.~65, no.~24, pp. 6350--6364, 2017.

\bibitem{boyd2004convex}
S.~Boyd and L.~Vandenberghe, \emph{Convex optimization}.\hskip 1em plus 0.5em
  minus 0.4em\relax Cambridge university press, 2004.

\bibitem{schraudolph2007stochastic}
N.~N. Schraudolph, J.~Yu, and S.~G{\"u}nter, ``A stochastic quasi-newton method
  for online convex optimization,'' in \emph{Artificial intelligence and
  statistics}, 2007, pp. 436--443.

\bibitem{li2001global}
D.-H. Li and M.~Fukushima, ``On the global convergence of the bfgs method for
  nonconvex unconstrained optimization problems,'' \emph{SIAM Journal on
  Optimization}, vol.~11, no.~4, pp. 1054--1064, 2001.

\bibitem{eisen2017decentralized}
M.~Eisen, A.~Mokhtari, and A.~Ribeiro, ``Decentralized quasi-newton methods,''
  \emph{IEEE Transactions on Signal Processing}, vol.~65, no.~10, pp.
  2613--2628, 2017.

\bibitem{barzilai1988two}
J.~Barzilai and J.~M. Borwein, ``Two-point step size gradient methods,''
  \emph{IMA journal of numerical analysis}, vol.~8, no.~1, pp. 141--148, 1988.

\bibitem{dai2005projected}
Y.-H. Dai and R.~Fletcher, ``Projected barzilai-borwein methods for large-scale
  box-constrained quadratic programming,'' \emph{Numerische Mathematik}, vol.
  100, no.~1, pp. 21--47, 2005.

\bibitem{ibrahimscaling}
M.~A. H.~B. Ibrahim, M.~Mamat, L.~W. June, A.~Zaidi, and M.~Sofi, ``The scaling
  of bfgs-sd in solving unconstrained optimization problem.''

\bibitem{byrd2016stochastic}
R.~H. Byrd, S.~L. Hansen, J.~Nocedal, and Y.~Singer, ``A stochastic
  quasi-newton method for large-scale optimization,'' \emph{SIAM Journal on
  Optimization}, vol.~26, no.~2, pp. 1008--1031, 2016.

\bibitem{byrd1987global}
R.~H. Byrd, J.~Nocedal, and Y.-X. Yuan, ``Global convergence of a cass of
  quasi-newton methods on convex problems,'' \emph{SIAM Journal on Numerical
  Analysis}, vol.~24, no.~5, pp. 1171--1190, 1987.

\bibitem{dai2013new}
Y.-H. Dai, ``A new analysis on the barzilai-borwein gradient method,''
  \emph{Journal of the operations Research Society of China}, vol.~1, no.~2,
  pp. 187--198, 2013.

\bibitem{nesterov1998introductory}
Y.~Nesterov, ``Introductory lectures on convex programming volume i: Basic
  course,'' \emph{Lecture notes}, vol.~3, no.~4, p.~5, 1998.

\bibitem{emiola2021distributed}
I.~Emiola, L.~Njilla, and C.~Enyioha, ``On distributed optimization in the
  presence of malicious agents,'' \emph{arXiv preprint arXiv:2101.09347}, 2021.

\bibitem{su2014differential}
W.~Su, S.~Boyd, and E.~Candes, ``A differential equation for modeling
  nesterov’s accelerated gradient method: Theory and insights,'' in
  \emph{Advances in Neural Information Processing Systems}, 2014, pp.
  2510--2518.

\bibitem{emiola2021comparison}
I.~Emiola and R.~Adem, ``Comparison of optimization methods with application to
  a network containing malicious agents,'' \emph{arXiv preprint
  arXiv:2101.10546}, 2021.

\bibitem{gill1972quasi}
P.~E. Gill and W.~Murray, ``Quasi-newton methods for unconstrained
  optimization,'' \emph{IMA Journal of Applied Mathematics}, vol.~9, no.~1, pp.
  91--108, 1972.

\bibitem{dai2002r}
Y.-H. Dai and L.-Z. Liao, ``R-linear convergence of the barzilai and borwein
  gradient method,'' \emph{IMA Journal of Numerical Analysis}, vol.~22, no.~1,
  pp. 1--10, 2002.

\bibitem{sofi2013reducing}
A.~Sofi, M.~Mamat, and M.~Ibrahim, ``Reducing computation time in dfp (davidon,
  fletcher \& powell) update method for solving unconstrained optimization
  problems,'' in \emph{AIP Conference Proceedings}, vol. 1522, no.~1.\hskip 1em
  plus 0.5em minus 0.4em\relax AIP, 2013, pp. 1337--1345.

\bibitem{sedrakyan2018algebraic}
H.~Sedrakyan and N.~Sedrakyan, \emph{Algebraic inequalities}.\hskip 1em plus
  0.5em minus 0.4em\relax Springer, 2018.

\end{thebibliography}

\end{document}